\begin{document}
\newtheorem{theorem}{Theorem}[section]
\newtheorem{lemma}[theorem]{Lemma}
\newtheorem{corollary}[theorem]{Corollary}
\newtheorem{prop}[theorem]{Proposition}
\newtheorem{definition}[theorem]{Definition}
\newtheorem{remark}[theorem]{Remark}


 \def\ad#1{\begin{aligned}#1\end{aligned}}  \def\b#1{{\bf #1}} \def\hb#1{\hat{\bf #1}}
\def\a#1{\begin{align*}#1\end{align*}} \def\an#1{\begin{align}#1\end{align}}
\def\e#1{\begin{equation}#1\end{equation}} \def\t#1{\hbox{\rm{#1}}}
\def\dt#1{\left|\begin{matrix}#1\end{matrix}\right|}
\def\p#1{\begin{pmatrix}#1\end{pmatrix}} \def\op#1{\operatorname{#1}}
\def\bS#1{\overline{\mathcal{#1}}_h}

 \numberwithin{equation}{section} \def\P{\Pi_h^\nabla}

\title  [P1 honeycomb virtual element ]
   {Superconvergent P1 honeycomb virtual elements and lifted P3 solutions}

\author {Yanping Lin}
\address{Department of Applied Mathematics, The Hong Kong Polytechnic University, Hung Hom, Hong Kong}
\email{yanping.lin@polyu.edu.hk}
\thanks{Yanping Lin is supported in part by HKSAR GRF 15302922  and polyu-CAS joint Lab.}

\author {Xuejun Xu}
\address{School of Mathematical Science, \ Tongji University, \ Shanghai, \ 200092, \ China}
	\address{Institute of Computational Mathematics, AMSS, Chinese Academy of Sciences, Beijing, 100190, China}
\email{xxj@lsec.cc.ac.cn}
\thanks{Xuejun Xu is supported by National Natural Science Foundation of China (Grant
Nos. 12071350), Shanghai Municipal Science and Technology Major Project No.
2021SHZDZX0100, and Science and Technology Commission of Shanghai Municipality.}

\author { Shangyou Zhang }
\address{Department of Mathematical
            Sciences, University
     of Delaware, Newark, DE 19716, USA. }
\email{szhang@udel.edu }

\date{}

\begin{abstract} 
When solving the Poisson equation on
   honeycomb hexagonal grids, we show that the $P_1$ virtual element is three-order 
  superconvergent in $H^1$-norm, and two-order superconvergent in $L^2$ and   $L^\infty$
  norms.
We define a local post-process which lifts the superconvergent $P_1$ solution to a 
   $P_3$ solution of the optimal-order approximation.
The theory is confirmed by a numerical test.

\end{abstract}

\subjclass{65N15, 65N30.}

\keywords{virtual element, honeycomb grid,
           superconvergence, Poisson equation.}

\maketitle \baselineskip=14pt

\section{Introduction}
In this work, we study  the
  $P_1$ virtual element on honeycomb meshes for solving the following Poisson equation. 
\an{ \label{p-e} \ad{ -\Delta  u & = f  && \t{in } \ \Omega, \\
            u&=0 && \t{on } \ \partial \Omega, } }
where  $f\in L^2(\Omega)$ and $\Omega\subset \mathbb{R}^2$ is a bounded polygonal domain 
which can be meshed by one-size honeycomb hexagons and a few additional polygons consisting
   of equilateral triangles only at domain boundary, such as the
   one illustrated in Figure \ref{f-grid}.
\begin{figure}[h!] 
\begin{center}\begin{picture}(410,100)(0,0)
  \put(-5,-170){\includegraphics[width=410pt]{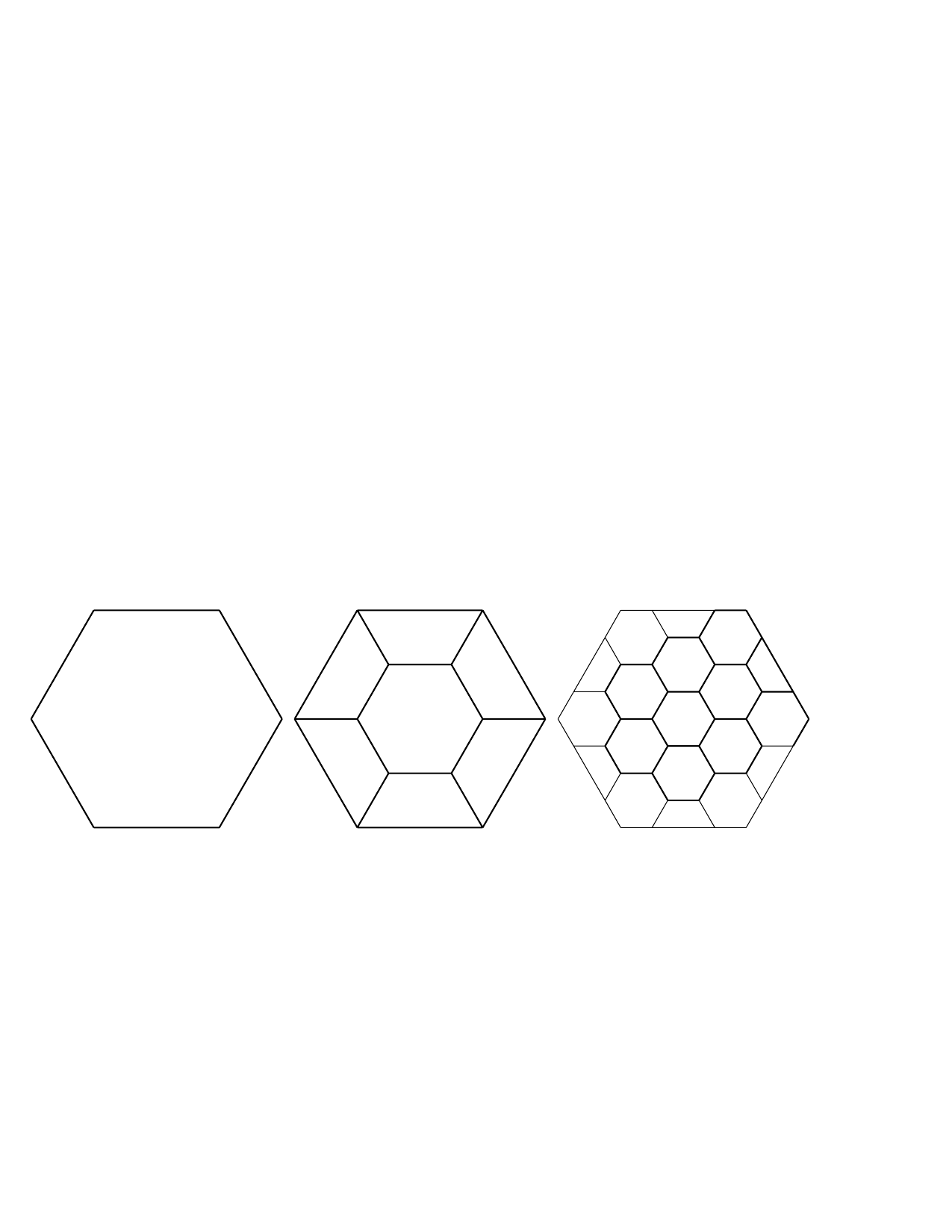}}
\end{picture} 
\end{center}
\caption{ The first three honeycomb grids on a regular hexagon domain $\Omega$. }   
\label{f-grid} 
\end{figure}

The virtual element method is proposed and studied in  
\cite{Beirao,Beirao16, Cao-Chen, Cao-Chen2,  Chen1, Chen2, Chen3,
  Feng-Huang,Feng-Huang2, Huang1, Huang2, Huang3, Wang-Mu, Wang-Wang}. 
In 2D, the virtual element functions are continuous $P_k$ polynomials on 
   edges of a polygonal mesh and extended to inside polygons by the Poisson equation with
   $P_{k-2}$ moments.
In particular, for the 2D $P_1$ virtual elements studied in this paper,
   the continuous $P_1$ polynomials on the edges of the mesh are harmonically
  extended into polygons.
Such non-polynomial virtual element functions are interpolated into
  a  computable polynomial space on each polygon.
As the interpolation is usually not a full-rank operator,
  a stabilizer has to be added to the discrete equation.
However, simply raising the polynomial degree of the local interpolation space,
   in order to  eliminate the stabilizer, 
  does not work in general, cf. \cite{Xu-Z}.

The references \cite{Berrone-0,Berrone-1} propose to use some high-degree polynomial 
   interpolation space only for 2D $P_1$ virtual elements to get some 
 stabilizer-free virtual element methods.
The reference \cite{Berrone} proposes to use only high-$P_k$ harmonic polynomials as
  the interpolation functions in a stabilizer-free virtual element method.
The references \cite{Lin,Xu-Z} use multi-piece polynomials as the interpolation space on 
 each polygon or polyhedron,  so that the resulting virtual element method
  works for any $P_k$ virtual element without using stabilizer.

In this work,  we apply the stabilizer-free $P_1$ virtual element method \cite{Lin,Xu-Z}
  to the Poisson equation \eqref{p-e} on 2D honeycomb meshes, 
   cf. Figure \ref{f-grid}.
Taking the advance of extreme symmetry, we show that the $P_1$ virtual element solution
 $u_h$ is
  three-order superconvergent (three orders above the optimal order) in $H^1$-norm, i.e.,
\an{\label{inf} \|\nabla (I_h u-u_h)\|_{L^2}\le C h^4|u|_{H^{4,\infty}}, }
where $H^{4,\infty}$ is the Sobolev space of functions with all 4-th
   derivatives bounded, and $I_h$ is the $P_1$ nodal interpolation at corners
   of honeycombs. 
Additionally,   the $P_1$ virtual element 
  solution is two-order superconvergent in $L^2$-norm and in $L^\infty$ norm, i.e., 
\an{\label{inf2} \|I_h u-u_h\|_{L^2}\le
  C\| I_h u-u_h\|_{L^\infty}\le C h^4|u|_{H^{4,\infty}}. }
We then define a local-lift post-process,  which lifts the $P_1$ virtual element
   solution $u_h$ to a piecewise $P_3$ solution 
   $\tilde u_h$ of the optimal order approximation, i.e.,  
\an{\label{3o} \| u-\tilde u_h\|_{L^2} + h |u-\tilde u_h|_{H^1,h}
    \le C h^4 |u|_{H^{4,\infty}}, }
where $|\cdot|_{H^1,h}$ is a piecewise $H^1$-semi-norm based on a 
  patch grid $\mathcal{R}_h$  (defined in section 3.)
We remark that the $P_1$ virtual element solution itself can only converge at the optimal order, 
two orders lower than that of \eqref{3o},
\a{ \| u- u_h\|_{L^2} + h |u- u_h|_{H^1,h}
    \le C h^2 |u|_{H^{2}}. }
The theory is confirmed by a numerical test.
  
Some early work and influential references on the superconvergence of $P_1$ finite elements 
   can be found in
 \cite{Blum-Lin,Bramble,Chen-Wang,Douglas,Huang-Zhang,Krizek,Schatz,Wahlbin,Wang,Wang-J,
  Zlamal,Zhu-Lin,Zienkiewicz}.
Some superconvergence results for weak Galerkin and conforming discontinuous 
 Galerkin finite element methods can be found in \cite{Al-Taweel-Z21,Wang-Z23,
Wang-Z23a,Ye-Z21b,Ye-Z21c,Ye-Z21e,Ye-Z21g,Ye-Z22c,Ye-Z22d,Ye-Z22e,
Ye-Z23b,Ye-Z23c,Ye-Z23d,Ye-Z23e}. 
                \vskip .7cm
 
\section{The $P_1$ honeycomb virtual element} 

We define the $P_1$ honeycomb virtual element in this section and show
  that the $P_1$ honeycomb virtual  element equation has a unique solution.

\def\TT{\overline{\mathcal{T}}_h}

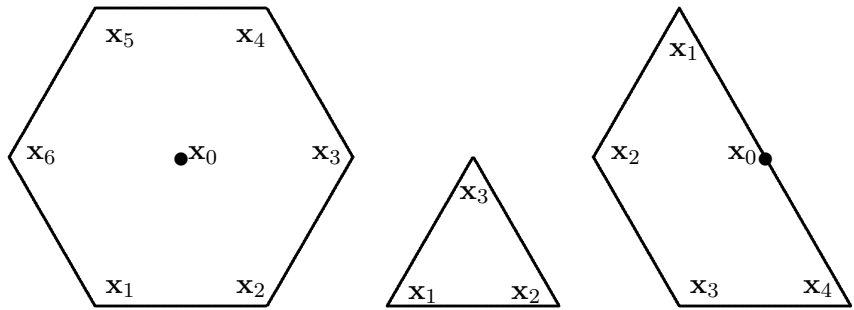
\begin{figure}[ht] \setlength{\unitlength}{1.3pt}

 \begin{center}\begin{picture}( 250.,100)( -90.,  0.)
     \def\lb{\circle*{0.8}}\def\lc{\vrule width1.2pt height1.2pt}
     \def\la{\circle*{0.3}}
\put(-90,0){\begin{picture}(  100.,  86.6025391)( -50.,  43.3012695)
     \def\lb{\circle*{0.8}}\def\lc{\vrule width1.2pt height1.2pt}
     \def\la{\circle*{0.3}} \put(0,86){\circle*{4}} \put(2,86){$\b x_0$}
     \put(-45,86){$\b x_6$} \put(-22,47){$\b x_1$} \put( 16,47){$\b x_2$}
     \put(38,86){$\b x_3$} \put(-22,120){$\b x_5$} \put( 16,120){$\b x_4$}
    \multiput( -25.00, 129.90)(   0.250,   0.000){200}{\la}
     \multiput( -50.00,  86.60)(   0.125,   0.217){200}{\la}
     \multiput(  50.00,  86.60)(  -0.125,   0.217){200}{\la}
     \multiput(  50.00,  86.60)(  -0.125,  -0.217){200}{\la}
     \multiput( -50.00,  86.60)(   0.125,  -0.217){200}{\la}
     \multiput( -25.00,  43.30)(   0.250,   0.000){200}{\la}
 \end{picture}}
\put(20,0){\begin{picture}(  50.,  43.3012695)( -25.,  129.903809)
     \def\lb{\circle*{0.8}}\def\lc{\vrule width1.2pt height1.2pt}
     \def\la{\circle*{0.3}}
         \put(-4,161){$\b x_3$}
        \put(-19,132){$\b x_1$} \put( 11,132){$\b x_2$}

     \multiput( -25.00, 129.90)(   0.125,   0.217){200}{\la}
     \multiput(  25.00, 129.90)(  -0.125,   0.217){200}{\la}
     \multiput( -25.00, 129.90)(   0.250,   0.000){200}{\la}

 \end{picture}}
\put(80,0){\begin{picture}(  75.,  86.6025391)( -50.,  43.3012695)
     \def\lb{\circle*{0.8}}\def\lc{\vrule width1.2pt height1.2pt}
     \def\la{\circle*{0.3}} \put(0,86){\circle*{4}} \put(-11,86){$\b x_0$}
     \put(-45,86){$\b x_2$} \put(-22,47){$\b x_3$} \put( 11,47){$\b x_4$}
      \put(-28,116){$\b x_1$}   
     \multiput( -25.00, 129.90)(   0.125,  -0.217){400}{\la}
     \multiput( -50.00,  86.60)(   0.125,   0.217){200}{\la}
     \multiput( -50.00,  86.60)(   0.125,  -0.217){200}{\la}
     \multiput( -25.00,  43.30)(   0.250,   0.000){200}{\la}

 \end{picture}}
 \end{picture}\end{center}\caption{Left: A regular hexagon inside $\Omega$, where
  $\b x_0$ is an auxiliary vertex of triangular mesh $\TT$;
   Middle: A regular triangle which can be only at a corner of $\Omega$;
   Right:  A pentagon consisting of three regular triangles which can be at an edge of $\Omega$
    with $\b x_1\b x_4\subset \partial\Omega$. }   
\label{f-3} 
\end{figure}  

Let $\mathcal{T}_h=\{ T \}$ be a uniform honeycomb mesh on  the domain $\Omega$,
 where $T$ is either a regular hexagon of edge-size $h$ inside $\Omega$,
  or a regular triangle of edge-side $h$ at a corner of the polygonal domain, or
  a pentagon consisting of three regular triangles of edge-side $h$ at an edge of domain, 
   cf. Figure \ref{f-3}.
We note that there is a vertex added at the middle of straight edge $\b x_1\b x_4$,
   cf. Figure \ref{f-3}.
For each hexagon $T$, we add the central point $\b x_0$ to it so that it is subdivided 
   into 6 regular triangles $\{ T_i\}$, cf. the left diagram of Figure \ref{f-3}.
For a pentagon $T\in \mathcal{T}_h$, we connect its center point $\b x_0$ with
  two vertices $\b x_2$ and $\b x_3$ so that
  it is subdivided into 3 regular triangles, cf. the right diagram in Figure \ref{f-3}.
In addition to the regular triangles in $ \mathcal{T}_h$ (the
  middle graph in Figure \ref{f-3}),  all theses regular triangles form a
  equilateral triangular mesh $\TT$ on domain $\Omega$.

 Let $\mathcal{E}_h$ denote the set of edges $e$ in $\mathcal{T}_h$.
 Let $E=\cup_{e\in \mathcal{E}_h} e$ and $\mathbb{B}_1(E)$ be the space of 
  piecewise linear functions on domain $E$,
\a{ \mathbb{B}_1(E) =\{ \tilde v \in C^0(E) : \tilde v|_e\in P_1(e) \ \t{ for all } e\in
   \mathcal{E}_h\}.  }
The virtual element space on the mesh $\mathcal{T}_h$ is defined as
\an{ \label{t-V-h} \ad{ \tilde 
    V_h=\{\tilde  v\in H^1_0(\Omega) & : \tilde v \in \mathbb{B}_k(\mathcal{E}_h ), \
   \Delta \tilde v|_K=0 \quad \forall T\in\mathcal{T}_h   \} .  } }

In computation, the non-polynomial virtual element function is interpolated to
  a piecewise polynomial.  
The piecewise polynomial space on each $T\in \mathcal{T}$ is defined by, cf.
  Figure \ref{f-3},
\a{  \mathbb{V}_1(T)=\begin{cases} \{ v_h \in H^1(T) : v_h|_{T_i}\in P_1(T_i) \ \t{if $
            T=\bigcup_{i=1}^6 T_i $ is a hexagon}\}, \\ 
        \{ v_h \in H^1(T) : v_h|_{T_1}\in P_1(T_1) \ \t{if 
           $ T= T_1$ is a triangle} \}, \\
       \{ v_h \in H^1(T) : v_h|_{T_i}\in P_1(T_i) \ \t{if $
            T= \bigcup_{i=1}^4 T_i$ is a pentagon} \}, \end{cases} }
where $T_i$ is an edge-size $h$ regular triangle.
        
The interpolated virtual element space on $\mathcal{T}_h$ is defined by
\an{ \label{V-h} V_h = \{ v_h=\Pi_h^\nabla \tilde v \ : \ v_h|_T \in \mathbb{V}_1(T), 
  \ T\in\mathcal{T}_h; \
   \tilde v\in \tilde V_h \}, }
where $v_h=\Pi_h^\nabla \tilde v$ is the local $H^1$-projection, i.e., $v_h$ is
   the local solution on $T\in\mathcal{T}_h$:   
\a{ \left\{ \ad{ (\nabla(v_h-\tilde v), \nabla w_h)_T&=0\quad \forall 
     w_h=\Pi_h^\nabla \tilde w \in \mathbb{V}_1(T), \\
           \langle v_h-\tilde v, w_h\rangle_{\partial T}& =0 \quad
     \forall w_h\in \mathbb{V}_1(T). } \right. }
The stabilizer-free virtual element equation reads:  
   Find $u_h=\P \tilde u\in V_h$ such that
\an{\label{f-e} (\nabla u_h,\nabla v_h)  = (f,v_h) 
     \quad \forall \tilde v\in \tilde V_h, \ v_h=\P \tilde v. }

\begin{lemma} The $P_1$ honeycomb virtual element equation \eqref{f-e} has a unique
  solution.
\end{lemma}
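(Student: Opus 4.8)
The plan is to prove existence and uniqueness by the standard finite-dimensional argument: since \eqref{f-e} is a square linear system on the finite-dimensional space $V_h$, it suffices to show that the only solution of the homogeneous problem ($f=0$) is $u_h=0$. So I would set $f=0$, take $v_h=u_h$ in \eqref{f-e}, and obtain $(\nabla u_h,\nabla u_h)=0$ on $\Omega$, hence $\nabla u_h=0$ on each triangle $T_i$ of the refined mesh $\TT$, so $u_h$ is piecewise constant on the subtriangulation. Because $u_h=\P\tilde u$ is the $H^1$-projection of some $\tilde u\in\tilde V_h$, and the projection matches $\tilde u$ in the boundary trace sense ($\langle u_h-\tilde u,w_h\rangle_{\partial T}=0$), I need to translate "$\nabla u_h=0$ on each $T_i$" back into information about $\tilde u$.

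The key step is to show that $\tilde u$ itself must then be zero. Taking $w_h = \P\tilde u = u_h$ in the first defining relation of the projection gives $(\nabla\tilde u,\nabla u_h)_T=(\nabla u_h,\nabla u_h)_T=0$ for every $T$; but this is not yet enough. Instead I would argue on the boundary: since $u_h$ is constant on each triangle $T_i$, its trace on $\partial T$ takes finitely many constant values, and the second projection condition forces the $L^2(\partial T)$-projection of $\tilde v|_{\partial T}$ (against $\mathbb{V}_1(T)$ traces) to agree accordingly; combined with $\tilde u\in H^1_0(\Omega)$ being continuous piecewise linear on the skeleton $\mathcal{E}_h$, and with the harmonicity $\Delta\tilde u|_T=0$, the vanishing of $\nabla u_h$ should propagate to $\tilde u$. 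The cleanest route is probably: from $(\nabla u_h,\nabla v_h)=0$ for all $v_h$ and the identity $(\nabla\tilde u,\nabla\tilde v)=(\nabla u_h,\nabla v_h) + (\nabla(\tilde u-u_h),\nabla(\tilde v - v_h))$ — which uses that $\nabla(\tilde u - u_h)\perp\nabla w_h$ for all interpolated $w_h$ — I can try to recover coercivity on $\tilde V_h$. Actually the honest statement is that \eqref{f-e} is equivalent to the standard VEM bilinear form being coercive on $\tilde V_h$; since $(\nabla\cdot,\nabla\cdot)$ is an inner product on $H^1_0(\Omega)$ and $\tilde V_h\subset H^1_0(\Omega)$, if one can show $\tilde v\mapsto \P\tilde v$ is injective on $\tilde V_h$ then the form $\tilde v\mapsto(\nabla\P\tilde v,\nabla\P\tilde v)$ is positive definite and we are done.

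So the heart of the matter reduces to: \textbf{the projection operator $\P:\tilde V_h\to V_h$ is injective}, equivalently, if $\P\tilde v=0$ then $\tilde v=0$. For this I would use that $\P\tilde v$ is determined by the full set of conditions $(\nabla\P\tilde v,\nabla w_h)_T=(\nabla\tilde v,\nabla w_h)_T$ and $\langle\P\tilde v,w_h\rangle_{\partial T}=\langle\tilde v,w_h\rangle_{\partial T}$ for $w_h$ ranging over $\mathbb{V}_1(T)$; if $\P\tilde v=0$ then $\langle\tilde v,w_h\rangle_{\partial T}=0$ for all $w_h\in\mathbb{V}_1(T)$, and since $\tilde v|_{\partial T}$ is continuous piecewise linear on the edges of $T$ while the traces of $\mathbb{V}_1(T)$ on $\partial T$ contain all such continuous piecewise linear functions (each $T_i$ has an edge on $\partial T$, and the space $\mathbb{V}_1(T)$ is rich enough to reproduce edgewise-linear boundary data — this is exactly the point of the multi-piece construction in \cite{Lin,Xu-Z}), we get $\tilde v|_{\partial T}=0$ for every $T$, hence $\tilde v\equiv 0$ on the skeleton $E$; finally $\Delta\tilde v|_T=0$ with zero boundary data forces $\tilde v\equiv 0$ in $\Omega$.

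The main obstacle I anticipate is verifying that the trace space $\{w_h|_{\partial T}:w_h\in\mathbb{V}_1(T)\}$ really does contain every continuous edgewise-linear function on $\partial T$ — this is a small combinatorial/linear-algebra check that depends on the specific subtriangulations (hexagon into $6$, pentagon into $4$ including the midpoint vertex on $\b x_1\b x_4$), and one must be careful that adding the interior vertex $\b x_0$ does not reduce the boundary-trace span. Once that surjectivity-onto-edgewise-linear-traces fact is in hand, injectivity of $\P$ and hence unique solvability of \eqref{f-e} follow immediately from the argument above; this is essentially the same reasoning that establishes well-posedness of the stabilizer-free method in \cite{Lin,Xu-Z}, specialized to the three honeycomb cell shapes, so I would cite those references for the trace-space lemma and keep the proof here short.
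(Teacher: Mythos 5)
Your proposal follows the same basic route as the paper's proof: treat \eqref{f-e} as a square finite-dimensional system, reduce existence to uniqueness, set $f=0$ and $v_h=u_h$ to get $\nabla u_h=0$ on each sub-triangle, and then conclude. In fact you supply two supporting facts that the paper's proof uses silently: (i) that the trace space of $\mathbb{V}_1(T)$ on $\partial T$ contains every continuous edgewise-linear function, so the weak matching $\langle \P\tilde v-\tilde v,w_h\rangle_{\partial T}=0$ forces the \emph{exact} identity $\P\tilde v|_{\partial T}=\tilde v|_{\partial T}$ (this is what justifies the paper's unexplained assertion $u_h\in C^0(\Omega)$ and $u_h|_{\partial\Omega}=0$), and (ii) that $\P$ is injective on $\tilde V_h$, which is needed to pass from $u_h=0$ back to the actual unknowns of the square system. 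The one logical slip is the sentence claiming that injectivity of $\P$ alone makes $\tilde v\mapsto(\nabla\P\tilde v,\nabla\P\tilde v)$ positive definite: $\|\nabla\P\tilde v\|=0$ only tells you $\P\tilde v$ is a constant on each $T$, not that $\P\tilde v=0$, so injectivity of $\P$ cannot yet be invoked. You must first use the exact trace matching from (i) to see that these elementwise constants agree across interfaces (because $\tilde v$ is globally continuous) and vanish by the boundary condition — precisely the paper's ``$u_h=C$ globally, hence $u_h=0$'' step — and only then use (i)/(ii) again to conclude $\tilde v|_{\partial T}=0$ on every $T$ and, by harmonicity, $\tilde v\equiv 0$. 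With the steps reordered this way your argument is complete and, on the points it elaborates, more careful than the paper's own two-line proof.
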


\begin{proof} \eqref{f-e} is a finite dimensional square system of linear equations.
Its existence of solution is implied by the uniqueness of the solution.
Let $f=0$ and $v_h=u_h$ in \eqref{f-e},  where $u_h$ is a solution.
\eqref{f-e} becomes $\|\nabla u_h\|_T^2=0$ on every $T\in \mathcal{T}_h$.
Thus $u_h\in P_0(T)$.  Because $u_h\in C^0(\Omega)$, $u_h=C$ is a global constant on
  $\Omega$. By $u_h|_{\partial \Omega}=0$, $u_h=0$.  The proof is complete.
\end{proof}

\section{The superconvergence in $H^1$-norm} 

We show in this section that the $P_1$ honeycomb virtual  element is
 three-order superconvergent in $H^1$-norm first, and is two-order superconvergent in 
$L^2$-norm and in $L^\infty$-norm.

\begin{theorem} Let $u\in H^{4,\infty}(\Omega)\cap H^1_0(\Omega)$ and $u_h\in V_h$ of \eqref{V-h} 
  be the solutions of Poisson's equation \eqref{p-e} and
  the finite element equation \eqref{f-e}, respectively.
  It holds that
\a{      | I_h u-u_h |_{H^1}\le C h^4|u|_{H^{4,\infty}}. } 
\end{theorem}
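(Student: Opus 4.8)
The plan is to derive the superconvergence estimate from a Galerkin orthogonality argument combined with an error-equation analysis that exploits the honeycomb symmetry. First I would write the consistency error: since $u_h = \P\tilde u$ solves \eqref{f-e} and $-\Delta u = f$, for any $\tilde v\in\tilde V_h$ with $v_h = \P\tilde v$ we have $(\nabla u_h,\nabla v_h) = (f,v_h) = (\nabla u,\nabla \tilde v) + \langle \partial_n u, \tilde v\rangle_{\partial\Omega}$, where the boundary term vanishes. Introducing the nodal interpolant $I_h u$, I would then test the error equation with $v_h = \P(I_h u - \tilde u_h)$, or rather work with the auxiliary interpolant $\tilde I_h u\in\tilde V_h$ that agrees with $u$ at the honeycomb corners, so that $I_h u = \P\,\tilde I_h u$ on each $T$. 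Setting $e_h := I_h u - u_h = \P(\tilde I_h u - \tilde u)$, coercivity of the discrete form (Lemma~2.1, in the quotient-by-constants sense) gives $|e_h|_{H^1}^2 \le \sum_{T} (\nabla(I_h u - u),\nabla e_h)_T + (\text{consistency remainder})$. The first sum is the crucial one and is estimated edge-by-edge.

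The heart of the argument is that on each equilateral triangle $T_i$ of the auxiliary mesh $\TT$, the quantity $(\nabla(I_h u - u),\nabla p)_{T_i}$ for $p$ linear admits a superconvergent cancellation. Concretely, I would expand $u$ about the centroid (or an edge midpoint) of $T_i$ to third order; the $P_1$ interpolation error $I_h u - u$ has, on $T_i$, a leading part that is a fixed quadratic form built from $D^2u$ plus cubic corrections, and $\nabla(I_h u - u)$ pairs against $\nabla p = \text{const}$. On a single triangle this pairing is only $O(h^2)$ times the triangle area; the gain comes from summing over the six triangles meeting at each honeycomb center and over adjacent hexagons. Because the honeycomb mesh is invariant under the $60^\circ$ rotation group about each interior vertex and under the reflection symmetries, the $O(h^2)$ and $O(h^3)$ contributions from the quadratic and cubic Taylor terms cancel in pairs when the test gradient is locally constant, leaving an $O(h^4)$ residual per patch — this is the standard "symmetry point / superconvergence by cancellation" mechanism (as in the references \cite{Schatz,Wahlbin,Krizek}) pushed to one order higher by the exact sixfold symmetry. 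I would make this precise by writing, for each interior node, the sum over its incident triangles as an integral of $D^3u$ (from the Taylor remainder) against an odd weight that integrates to zero, so that a further Taylor expansion produces the $h^4 |u|_{H^{4,\infty}}$ bound after a discrete Cauchy–Schwarz in $e_h$.

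The remaining pieces are lower-order bookkeeping: the consistency remainder coming from the difference between $v_h = \P\tilde v$ and $\tilde v$ must be controlled, but since $\P$ is the local $H^1$-projection onto $\mathbb{V}_1(T)$ and the virtual functions are harmonic, $\nabla(v_h - \tilde v)$ is orthogonal to $\nabla\mathbb{V}_1(T)$ and its size is governed by the approximation of harmonic extensions by piecewise linears on the six-triangle split, which is again $O(h)$ and combines with the interpolation error to stay within budget; on the boundary triangles and pentagons one uses the same expansion with the added midpoint vertex on $\b x_1\b x_4$. The main obstacle I anticipate is organizing the cancellation cleanly: one must choose the right expansion point on each triangle and the right grouping of triangles (the six-fan around interior vertices, plus the compensating terms across shared hexagon edges) so that \emph{both} the quadratic and the cubic terms drop out against a piecewise-constant gradient, and verify that the leftover weight genuinely has vanishing zeroth and first moments — this is where the full honeycomb symmetry, rather than mere shape-regularity, is indispensable, and it is also where the $H^{4,\infty}$ (rather than $H^4$) regularity enters, to bound the $D^3u$-remainder pointwise on the patch rather than in a Sobolev norm that would lose a power of $h$.
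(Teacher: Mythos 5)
Your overall skeleton --- Galerkin orthogonality, testing with $e_h=I_hu-u_h$, and reducing the estimate to $\sum_T(\nabla(I_hu-u),\nabla e_h)_T$ over the equilateral sub-triangulation --- coincides with the paper's starting point, but two steps in your proposal would not close as written. The first concerns the hexagon centers. On a hexagon split into six equilateral triangles, $I_hu\in V_h$ is \emph{not} the nodal interpolant of $u$ on the sub-triangulation: its value at the added center $\b x_0$ is the discrete-harmonic value $\frac16\sum_{i=1}^6u(\b x_i)$, not $u(\b x_0)$. The paper isolates this discrepancy as $d_h=I_hu-\overline I_hu$, a multiple of the interior hat function at $\b x_0$, and shows $(\nabla d_h,\nabla e_h)_T=0$ \emph{exactly}, because every function in $V_h$ (hence $e_h$) is discrete harmonic at the center node --- a consequence of the definition of $\Pi_h^\nabla$ and the harmonicity of the virtual functions. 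Your proposal instead files this under a ``consistency remainder'' that is ``$O(h)$ and \dots stays within budget''; with a target of $O(h^4)$ against an optimal rate of $O(h)$, no crude bound on this term can stay within budget. You need the exact orthogonality, and you need to say why it holds.

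The second gap is the cancellation mechanism for the $O(h^2)$ term. You propose to group the six triangles around each interior vertex and cancel the quadratic and cubic Taylor contributions ``when the test gradient is locally constant,'' but $\nabla e_h$ is constant only on each individual triangle, not on a vertex patch, so the sixfold grouping does not by itself produce the cancellation. The paper's route is per-triangle and per-edge: it converts the volume pairing to edge integrals by the divergence theorem, writes the normal derivative of $e_h$ as a combination of tangential derivatives using the $60^\circ$ geometry, applies the Euler--Maclaurin formula on each edge (so the $h^2$-term carries $\partial_{\b t}^2u$ and the $h^3$-term is absent because $b_3=0$), and then kills the $h^2$-term identically by integration by parts together with the identity $\sum_{i=0}^2\partial_{\b t_i}u=0$, valid since $\sum_{i=0}^2\b t_i=\b 0$ for the three unit tangents of an equilateral triangle, plus sign cancellation of tangential-derivative terms across shared edges. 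Your symmetry heuristic points at the right phenomenon, but without an explicit identity of this kind the two extra orders of cancellation are asserted rather than proved; as it stands the argument would only deliver the classical one-order supercloseness $O(h^2)$.
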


\begin{proof} For analysis,  we introduce the auxiliary, conforming $P_1$ finite element
   on a sub-triangular mesh of $\mathcal{T}_h$.
    Let $\mathcal{T}_h^6$ be the set of all hexagons in $\mathcal{T}_h$.
For each $T\in \mathcal{T}_h^6$, we introduce a new vertex/node $\b x_0$, the center 
  of the regular hexagon.
  Let $\mathcal{N}_h$ be the set of all vertices/nodes in $\mathcal{T}_h$.
Let 
\an{\label{b-N} \bS N= \mathcal{N}_h \bigcup \{ \b x_0 : \b x_0\in T \ \forall T\in 
    \mathcal{T}_h^6\}. }
Subdividing all hexagons and pentagons in $\mathcal{T}_h$ by
   connecting $\b x_0$ with all $\b x_i$, cf. Figure \ref{f-3},
  we have an equilateral triangular mesh on $\Omega$,
\an{\label{b-T} \bS T=\{ T : T \ \t{ is a regular triangle of edge-size $h$}\}.
   }
The auxiliary, conforming $P_1$ finite element
   space on triangular mesh $\bS T$ is defined by
\an{\label{b-V} \overline{V}_h =\{ v_h\in H^1_0(\Omega) : v_h|_T \in P_1(T) \
           \forall T\in \bS T\}.  }

Let $e_h=I_h u-u_h$, where $I_h : H^{2}(\Omega)\cap H^1_0(\Omega)
    \to V_h$ is the nodal interpolation
  operator based on nodal set $\mathcal{N}_h$.
Let $\overline{I}_h  : H^{2}(\Omega)\cap H^1_0(\Omega)
    \to \overline{V}_h$ (defined in \eqref{b-V}) be the nodal interpolation
  operator based on nodal set $\bS N$  (defined in \eqref{b-N}), i.e.,
\a{ I_u u(\b x_j) = u(\b x_j) \quad\forall \b x_j \in \bS N. }
Let 
\an{\label{d-h} d_h = I_h u - \overline{I}_h u \in \overline{V}_h.  }
By the harmonic solution in \eqref{V-h}, $d_h$ is supported only inside all
   hexagons $T\in \mathcal{T}^6_h$ and, cf Figure \ref{f-3}, due to a 
  regular hexagon,
\a{ d_h(\b x_0) &= \frac 16 \sum_{i=1}^6 u(\b x_i) - u(\b x_0),  \\
    d_h(\b x_i) &= 0, \quad i=1,\dots, 6.  }

Because
\an{\label{o} (\nabla u-\nabla u_h, \nabla v_h)=0 \quad\forall v_h\in V_h, }
it follows that by the divergence theorem,
\an{\label{m} \ad{ | I_h u-u_h |_{H^1}^2 &= ( \nabla(I_h u-u), \nabla e_h)\\ 
&= ( \nabla d_h, \nabla e_h) + ( \nabla(\overline{I}_h u-u), \nabla e_h)\\ 
   &= \sum_{T\in\mathcal{T}_h^6} ( \nabla d_h, \nabla e_h)_T + 
   \sum_{T\in\bS T} ( \nabla(\overline{I}_h u-u), \nabla e_h)_T \\
    & = 0 + \sum_{T\in\bS T}\sum_{i=0}^2
       \int_{\b x_i\b x_{i+1}}  (\overline{I}_h u-u)
             \partial_{\b t_{i+2}^\perp} e_h d\b s, }
} where $\b t_i^\perp$ is the $90$ degrees clockwise rotation of $\b t_i$,
   and $\b x_i$ are three vertices of $T$ depicted in Figure \ref{T}.
Here $( \nabla d_h, \nabla e_h)_T=0$ because $e_h$ is discrete harmonic inside
  a hexagon $T$ by \eqref{V-h}, and $d_h$ is supported inside $T$.

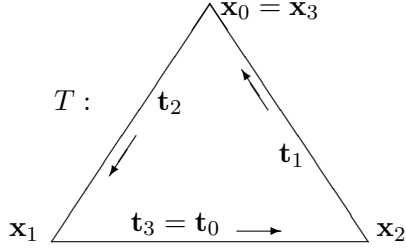
\begin{figure}[ht] \centering 
\begin{picture}(120,90)(0,0) \put(-16,2){$\b x_1$} \put(123,2){$\b x_2$} 
\put(64,86){$\b x_0=\b x_3$} 
 \put(1,50){$T:$} \put(40,50){$\b t_2$} \put(70,4){\vector(1,0){17}}  
   \put(32,40){\vector(-2,-3){10}} \put(82,50){\vector(-2,3){10}}
   \put(86,30){$\b t_1$}  \put(30,4){$\b t_3=\b t_0$}
  \put(0,0){\line(2,3){60}}\put(120,0){\line(-2,3){60}}\put(0,0){\line(1,0){120}}
\end{picture} \caption{A equilateral triangle with three unit tangent vectors.}
  \label{T}
\end{figure}

For linear polynomials, the normal derivative is a linear combination of two
  tangential derivatives.
We have, due to equilateral triangles, cf. Figure \ref{T},
\an{\label{m-0} \ad{ &\quad \  \sum_{T\in\bS T}\sum_{i=0}^2
       \int_{\b x_i\b x_{i+1}}  (\overline{I}_h  u-u) \partial_{\b t_{i-1}^\perp} e_h d\b s\\
  &=  \sum_{T\in\bS T}\sum_{i=0}^2
       \int_{\b x_i\b x_{i+1}}  (\overline{I}_h  u-u)
     \frac {2}{\sqrt{3}}
   (  \partial_{\b t_{i+1}} - \frac 1 2 \partial_{\b t_{i-1}}) e_h d\b s\\
  &=  \sum_{T\in\bS T}\sum_{i=0}^2
       \int_{\b x_i\b x_{i+1}}  (\overline{I}_h  u-u)
       \frac 2{\sqrt{3}} \partial_{\b t_{i+1}}  e_h d\b s, } }
   where the second term cancels itself from the other side of triangle
  because $\b t_{i+1}$ has an opposite direction on the other side
  of edge $\b x_i\b x_{i+1}$, or $e_h=0$ if $\b x_i\b x_{i+1}$ has no
  other neighbor triangle, i.e., a boundary edge. 

A version of the Euler-Maclaurin formula is, cf. \cite{Kac},
\a{ \int_a^b f(t) dt &= \sum_{n=a}^b f(n) -\frac{f(a)+f(b)}2
    -\sum_{i=1}^k \frac{b_{i}}{i!}
    (f^{(i-1)}(b)-f^{(i-1)}(a)) \\
    & \quad \ - \int_{a}^b P_{k+1} (t)f^{(k+1)}(t) dt, }
where $b_{i}$ is a Bernoulli number and $P_{k+1}(t)$ is
  a scaled  periodic Bernoulli polynomial $B_{k+1}(\{1-t\})/((k+1)!)$.
Other than $b_1$, all odd Bernoulli numbers are zero.
Letting $k=3$, $a=0$ and $b=1$,
  we 
have \a{ \int_0^1 f dt &= h \frac{f(0)+f(1)}2
    + \frac{ h^{2} }{4} \int_0^1 f'' dt 
       -  h^{4}\int_{0}^1 P_{4} (t)f^{(4)}(t) dt.} 
Noting that $\int_{\b x_1\b x_2} \overline{I}_h  u d\b s=(u(\b x_1)+u(\b x_2))h/2$,
we get from \eqref{m} and \eqref{m-0},  
\an{\label{m1} \ad{ |e_h|_{H^1}^2 &= -\sum_{T\in\bS T}\sum_{i=0}^2
       \frac { h^2} {2 \sqrt{3}} \int_{\b x_i\b x_{i+1}} \partial_{\b t_{i-1}}^2 u
       \partial_{\b t_{i+1}}  e_h d\b s\\
  &\quad \ + \sum_{T\in\bS T}\sum_{i=0}^2
       h^4 \int_{\b x_i\b x_{i+1}}   P_4(\b x) \partial^4_{\b t_0} u
         \partial_{\b t_{i+1}}  e_h d\b s, \\
  &=:  \frac { h^2} {2\sqrt{3}} I_1+I_2. } }
By Cauchy-Schwarz inequality,  we get, because $P_4$ has a zero-point inside the edge
  ($\int_{\b x_i\b x_{i+1}}   P_4(\b x) d\b s=0$),
\an{\label{m-i-2}\ad{
 |I_2| & \le C h^4 |u|_{H^{4,\infty} }
            \bigg( \sum_{T\in \bS T}  |e_h|_{H^{1}(T)}^2\bigg)^{1/2}   
  = C h^4  |u|_{H^{4,\infty} }|e_h|_{H^{1}}. } }

For the first term in \eqref{m1}, we consider an area-integral, cf. Figure \ref{T},
 by integration by parts, noting that $\partial_{\b t_2}e_h$ is a constant,
\a{&\quad \
  \int_{T} (\partial_{\b t_1}\partial_{\b t_0}^2 u) \partial_{\b t_2}e_h d\b x  
   =\int_{\b x_1\b x_0} \partial_{\b t_0}^2 u \partial_{\b t_2} e_h d\b s
   -\int_{\b x_1\b x_2} \partial_{\b t_0}^2 u \partial_{\b t_2} e_h d\b s. }
When summing the first term above,  as $\b t_2$ is tangential to $\b x_1\b x_0$,
  the term is canceled with the term on the other side of the triangle.
Thus, we have, from \eqref{m1},
\a{ I_1 &= \sum_{T\in\bS T}\sum_{i=0}^2 \int_T(\partial_{\b t_{i}}
          \partial_{\b t_{i-1}}^2 u) \partial_{\b t_{i+1}}  e_h d\b x. }
Taking integration by parts, we get, noting that the angle between two $\b t_i$ is
  $120^0$, 
\an{\label{m2} \ad{ I_1 &= \sum_{T\in\bS T}\sum_{i=0}^2
   \bigg( -\int_T(\partial_{\b t_{i}}
          \partial_{\b t_{i+1}} \partial_{\b t_{i-1}}^2 u)   e_h d\b x \\
     &\ \quad   +\frac 12 \int_{\b x_i\b x_{i+1}}(\partial_{\b t_{i}}
          \partial_{\b t_{i-1}}^2 u) e_h  d\b s  
      +\frac 12 \int_{\b x_{i+1}\b x_{i-1}} (\partial_{\b t_{i}}
          \partial_{\b t_{i-1}}^2 u) e_h d\b s \bigg)\\ 
&= \sum_{T\in\bS T}\sum_{i=0}^2
    -\int_T(\partial_{\b t_{i}}
          \partial_{\b t_{i+1}} \partial_{\b t_{i-1}}
         \Big(\sum_{i=0}^2\partial_{\b t_i} u\Big)   e_h d\b x\\
 &=0,
   }  }
where $\sum \int_{\b x_i\b x_{i+1}}(\partial_{\b t_{i}} 
          \partial_{\b t_{i-1}}^2 u) e_h   d\b s =0$ as $\b t_i$ are parallel but in
  opposite directions on the two sides of the edge $\b x_i\b x_{i+1}$,
  $\sum \int_{\b x_{i+1}\b x_{i-1}}(\partial_{\b t_{i}} 
          \partial_{\b t_{i-1}}^2 u) e_h   d\b s =0$ as $\b t_i$ lies on the edge
   but in opposite directions on the two sides of the edge $\b x_i\b x_{i+1}$,
   and $\sum_{i=0}^2\partial_{\b t_i} u=0$ as $\sum_{i=0}^2 {\b t_i} =\b 0$.
By \eqref{m1}, \eqref{m-i-2} and \eqref{m2},  the theorem is proved.
\end{proof}

\begin{theorem} Let $u\in H^{4,\infty}(\Omega)  \cap H^1_0(\Omega)$
      and $u_h\in V_h$ of \eqref{V-h} 
  be the solutions of Poisson's equation \eqref{p-e} and
  the finite element equation \eqref{f-e}, respectively.
  It holds that
\a{      \| I_h u-u_h\|_{L^2}\le
     C\| I_h u-u_h\|_{L^\infty}\le C h^4|u|_{H^{4,\infty}}. } 
\end{theorem}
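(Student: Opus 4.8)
The first bound in the statement is trivial: $\Omega$ is bounded, so writing $e_h:=I_hu-u_h$ we have $\|e_h\|_{L^2(\Omega)}\le|\Omega|^{1/2}\|e_h\|_{L^\infty(\Omega)}$, and it remains to prove $\|e_h\|_{L^\infty(\Omega)}\le Ch^4|u|_{H^{4,\infty}}$. Since $V_h\subset C^0(\Omega)$ (a fact already invoked in Lemma 2.1) and $e_h$ is piecewise linear on the equilateral triangular mesh $\bS T$ of \eqref{b-T}, the $L^\infty$-norm of $e_h$ is attained at a vertex $\b z$ of $\bS T$, so it suffices to bound $|e_h(\b z)|$.

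My plan is a duality argument based on the discrete Green's function, reusing the cancellation that was isolated in the proof of the preceding theorem. Let $G_h\in V_h$ be the discrete Green's function at $\b z$, i.e. the solution of $(\nabla G_h,\nabla v_h)=v_h(\b z)$ for all $v_h=\P\tilde v\in V_h$; this square system is uniquely solvable by the same coercivity argument that gave well-posedness of \eqref{f-e}. Taking $v_h=e_h$ and using symmetry gives $e_h(\b z)=(\nabla e_h,\nabla G_h)$, while the Galerkin orthogonality \eqref{o} (with $v_h=G_h$) gives $(\nabla(u-u_h),\nabla G_h)=0$; hence $e_h(\b z)=(\nabla(I_hu-u),\nabla G_h)$. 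I would then replay the computation of the previous proof with $G_h$ in the role formerly played by $e_h$: split $I_hu-u=d_h+(\overline{V}_h\text{-part})$ as in \eqref{d-h}; discard the hexagon terms, since $(\nabla d_h,\nabla G_h)_T=0$ for every hexagon $T$ (this uses only that $G_h\in V_h$ is discrete-harmonic inside $T$ and that $d_h$ vanishes on $\partial T$); turn the remaining conforming part into edge integrals by the divergence theorem and the equilateral geometry; and invoke the Euler--Maclaurin expansion. The key point is that the $O(h^2)$ leading term $I_1$ vanishes identically for \emph{any} continuous piecewise-linear function on $\bS T$ --- the cancellations that produce \eqref{m2} use only integration by parts, the orientation reversal of the edge tangents across shared edges, the homogeneous boundary condition on boundary edges, and the identity $\sum_{i=0}^2\b t_i=\b 0$ --- so one is left with the remainder alone,
\[
e_h(\b z)=\sum_{T\in\bS T}\sum_{i=0}^2 h^4\int_{\b x_i\b x_{i+1}}P_4(\b x)\,\partial^4_{\b t_0}u\;\partial_{\b t_{i+1}}G_h\,d\b s .
\]
Because $\partial_{\b t_{i+1}}G_h$ is constant on each triangle of $\bS T$, running the estimate of \eqref{m-i-2} but pairing $\partial^4_{\b t_0}u\in L^\infty$ against $\nabla G_h$ in $L^1$ (instead of $L^2$ against $L^2$) yields
\[
|e_h(\b z)|\le Ch^4|u|_{H^{4,\infty}}\,\|\nabla G_h\|_{L^1(\Omega)} .
\]

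It remains to bound $\|\nabla G_h\|_{L^1(\Omega)}$ uniformly in $h$, and this is the main obstacle. For the conforming $P_1$ space on the quasi-uniform mesh $\bS T$ it is classical (see, e.g., \cite{Schatz,Wahlbin}) that the discrete Green's function obeys the pointwise bound $|\nabla G_h(\b x)|\le C(|\b x-\b z|+h)^{-1}$; integrating, $\|\nabla G_h\|_{L^1(\Omega)}\le C\int_\Omega(|\b x-\b z|+h)^{-1}\,d\b x\le C$, the decisive feature being that the $L^1$-norm of the Green's gradient, unlike its $L^2$-norm, is free of any factor $|\ln h|^{1/2}$. What must be checked is that the same pointwise bound holds for $G_h\in V_h$ rather than in $\overline{V}_h$; this is the one genuinely new ingredient, and I would get it from $V_h\subset\overline{V}_h$, the uniform coercivity of the method, and the fact that $\P$ modifies a conforming $P_1$ function only inside the hexagons in the controlled way recorded in \eqref{d-h}, so that the $V_h$ Green's function can be compared element by element with the $\overline{V}_h$ one. (If one is content to lose a factor $|\ln h|^{1/2}$, this step is unnecessary: the discrete Sobolev inequality $\|v_h\|_{L^\infty(\Omega)}\le C|\ln h|^{1/2}|v_h|_{H^1}$ together with the preceding theorem already gives $\|e_h\|_{L^\infty}\le Ch^4|\ln h|^{1/2}|u|_{H^{4,\infty}}$; the Green's-function route is what removes the logarithm.) Combining the last display with $\|\nabla G_h\|_{L^1(\Omega)}\le C$ gives $\|e_h\|_{L^\infty(\Omega)}\le Ch^4|u|_{H^{4,\infty}}$, hence also $\|e_h\|_{L^2(\Omega)}\le C\|e_h\|_{L^\infty(\Omega)}\le Ch^4|u|_{H^{4,\infty}}$, which is the claim.
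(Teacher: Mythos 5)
Your overall strategy coincides with the paper's: both proofs represent the nodal error through a discrete Green's function $g_h^j\in V_h$ with $(\nabla g_h^j,\nabla v_h)=v_h(\mathbf{x}_j)$, use the Galerkin orthogonality \eqref{o} to write $(I_hu-u_h)(\mathbf{x}_j)=(\nabla(I_hu-u),\nabla g_h^j)$, kill the hexagon contributions via $d_h$ exactly as in \eqref{d-h}, and then rerun the Euler--Maclaurin computation \eqref{m1}--\eqref{m2} with $g_h^j$ in place of $e_h$, correctly observing that the vanishing of $I_1$ uses nothing about $e_h$ beyond piecewise linearity, continuity, the boundary condition and $\sum_i\mathbf{t}_i=\mathbf{0}$. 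The two arguments part ways only at the very last step. The paper pairs in $L^2$--$L^2$, obtains $|(I_hu-u_h)(\mathbf{x}_j)|\le Ch^4|u|_{H^{4,\infty}}|g_h^j|_{H^1}$, controls $|g_h^j|_{H^1}\le|\overline{g}_h^j|_{H^1}$ by noting that $g_h^j$ is the $H^1$-orthogonal projection of the conforming Green's function $\overline{g}_h^j\in\overline{V}_h$ onto $V_h\subset\overline{V}_h$ (a clean device worth knowing), and then appeals to \cite{Rannacher-Scott}. You pair in $L^\infty$--$L^1$ and invoke the pointwise decay $|\nabla G_h(\mathbf{x})|\le C(|\mathbf{x}-\mathbf{z}|+h)^{-1}$ to get $\|\nabla G_h\|_{L^1}\le C$. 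Your concern about the logarithm is legitimate --- in two dimensions $|\overline{g}_h^j|_{H^1}^2=\overline{g}_h^j(\mathbf{x}_j)\sim|\ln h|$, so a bound of the form $|\overline{g}_h^j|_{H^1}\le C$ is exactly the delicate point in the paper's own chain of inequalities, and the $L^1$-gradient route is the standard way to avoid it.

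The one genuine gap is the step you yourself flag: the pointwise bound $|\nabla G_h|\le C(|\mathbf{x}-\mathbf{z}|+h)^{-1}$ is classical for the conforming space $\overline{V}_h$ on the quasi-uniform mesh $\overline{\mathcal{T}}_h$, but you need it for the Green's function of the virtual space $V_h$, and the comparison device that works for the paper (orthogonal projection controls the global $H^1$-seminorm) gives no control of $\|\nabla G_h\|_{L^1}$ or of local pointwise values of $\nabla G_h$: an $H^1$-projection can redistribute gradient mass locally. Your sketched fix (``compare element by element using \eqref{d-h}'') is not a proof, since \eqref{d-h} describes the interpolant $I_hu$, not the Green's function, and establishing weighted or pointwise estimates for the $V_h$ Green's function would require reproving the Schatz--Wahlbin local-energy machinery for this nonstandard, piecewise-$P_1$-on-subtriangles space. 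Until that lemma is supplied, your argument proves only the logarithm-weakened bound $\|I_hu-u_h\|_{L^\infty}\le Ch^4|\ln h|^{1/2}|u|_{H^{4,\infty}}$ that you mention as a fallback, which is strictly weaker than the stated theorem.
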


\begin{proof}  
  Let $g^j_h\in V_h$ be the discrete Green function at an interior vertex 
   $\b x_j\in \mathcal{N}_h$  
  satisfying
\a{ (\nabla g^j_h, \nabla v_h) = v_h(\b x_j) \quad \forall v_h\in  {V}_h.  }
Because of \eqref{o}, we have, similar to \eqref{m},
\a{ \ad{ (I_h u-u_h)(\b x_j) &= (\nabla(I_h u-u), \nabla g^j_h)  \\
    &= \sum_{T\in\mathcal{T}_h^6} (\nabla d_h, \nabla g^j_h)_T + 
       \sum_{T\in\bS T} (\nabla(\overline{I}_h u-u), \nabla g^j_h)_T  \\
    & = \sum_{T\in\mathcal{T}_h}\sum_{i=0}^2
       \int_{\b x_i\b x_{i+1}}  (I_h u-u) \partial_{\b t_{i+2}^\perp} g^j_h d\b s, }
} where $d_h$ is defined in \eqref{d-h},  and other notations are
  defined in \eqref{m}.
  
Repeating the proof from \eqref{m1} to \eqref{m2}, i.e., replacing $e_h$ there by
  $g^j_h$,  we get
\an{\label{i-1} |(I_h u- u_h)(\b x_j)| & = \bigg|
        \sum_{T\in\bS T} h^4 \int_{\partial T}(P_4)(\partial_{\b t}^4 u)(\partial_{\b t}
                 g^j_h) d\b s \bigg| \\
              &\le C h^4 |u|_{H^{4,\infty}}
   \bigg(\sum_{T\in\mathcal{T}_h} |g^j_h|_{H^1(T)}^2 \bigg)^{1/2}\\
              & \le C h^4 |u|_{H^{4,\infty}} |\overline{g}_h^j|_{H^1},}
where $\b t$ is a unit tangential vector on each edge of $T$,
  and $\overline{g}_h^j$ is the discrete function in $\overline{V}_h$, i.e., for a $\b x_j\in
   \bS N$,
\a{ (\nabla \overline{g}_h^j,\nabla  v_h) = v_h(\b x_j) \quad\forall v_h\in\overline{V}_h. }
Therefore, for a $\b x_j\in \mathcal{N}_h$, as $V_h\subset \overline{V}_h$, 
\a{ (\nabla {g}_h^j,\nabla v_h) = v_h(\b x_j)=(\nabla \overline{g}_h^j, \nabla v_h) 
             \quad\forall v_h\in {V}_h. }
It implies that ${g}_h^j$ is an $H^1$-orthogonal projection of $\overline{g}_h^j$
   in $V_h$.
By \eqref{i-1} and the $H^{1,\infty}$ stability of $\overline{g}_h^j$ from
  \cite{Rannacher-Scott}, we have 
\a{ |I_h u-u_h|_{L^\infty} &= \max_{\b x_j\in \mathcal{N}_h}
           |(I_h u-u_h)(\b x_j)| \\&\le 
  C h^4  |u|_{H^{4,\infty} } |{g}_h^j|_{H^1} \\&\le 
  C h^4  |u|_{H^{4,\infty} } |\overline{g}_h^j|_{H^1} \\&\le 
  C h^4  |u|_{H^{4,\infty} } |\overline{g}_h^j|_{H^{1,\infty}} \\&\le 
  C h^4  |u|_{H^{4,\infty} }. }
Consequently,
\a{ \|I_h u- u_h\|_{L^2}   \le 
    \t{meas}(\Omega) \|I_h u- u_h\|_{\infty}  \le 
  C h^4  |u|_{H^{4,\infty} }. } The proof is complete.
\end{proof}

                \vskip .7cm
   
\section{Lift the $P_1$ solution to a $P_3$ solution}
     
We can choose a larger patch, and/or several patterns of patches, for 
  locally lifting a solution.
For a simple way of computation, we choose only one patch $R$ of shape of
   one regular triangle of edge-size $2h$ on one honeycomb 
   mesh $\mathcal{T}_h$, cf. Figure \ref{f-6tr}.

\begin{figure}[ht] \setlength{\unitlength}{0.7pt}
 \begin{center}\begin{picture}(  430.,  200)( -100.,  0.)
     \def\lb{\circle*{0.8}}\def\lc{\vrule width1.2pt height1.2pt}
     \def\la{\circle*{0.3}}
 \put(-80,170){$R:$}  \put(106,170){$\mathcal{R}_h:$}
 \put(-5,6){$e_1$}  \put(50,100){$e_2$} \put(-60,100){$e_3$} 
     \multiput(-100.00,   0.00)(   0.250,   0.000){800}{\la}
     \multiput(-100.00,   0.00)(   0.125,   0.217){800}{\la}
     \multiput( 100.00,   0.00)(  -0.125,   0.217){800}{\la}
     \multiput(  23.00, 129.90)(  -3.000,   0.000){ 16}{\la}
     \multiput( -49.00,  84.87)(   1.500,  -2.598){ 16}{\la}
     \multiput( -49.00,   1.73)(   1.500,   2.598){ 16}{\la}
     \multiput(  49.00,  84.87)(  -1.500,  -2.598){ 16}{\la}
     \multiput(  49.00,   1.73)(  -1.500,   2.598){ 16}{\la}
     \multiput( -23.00,  43.30)(   3.000,   0.000){ 16}{\la}
   \put(100,-110){\includegraphics[width=190pt]{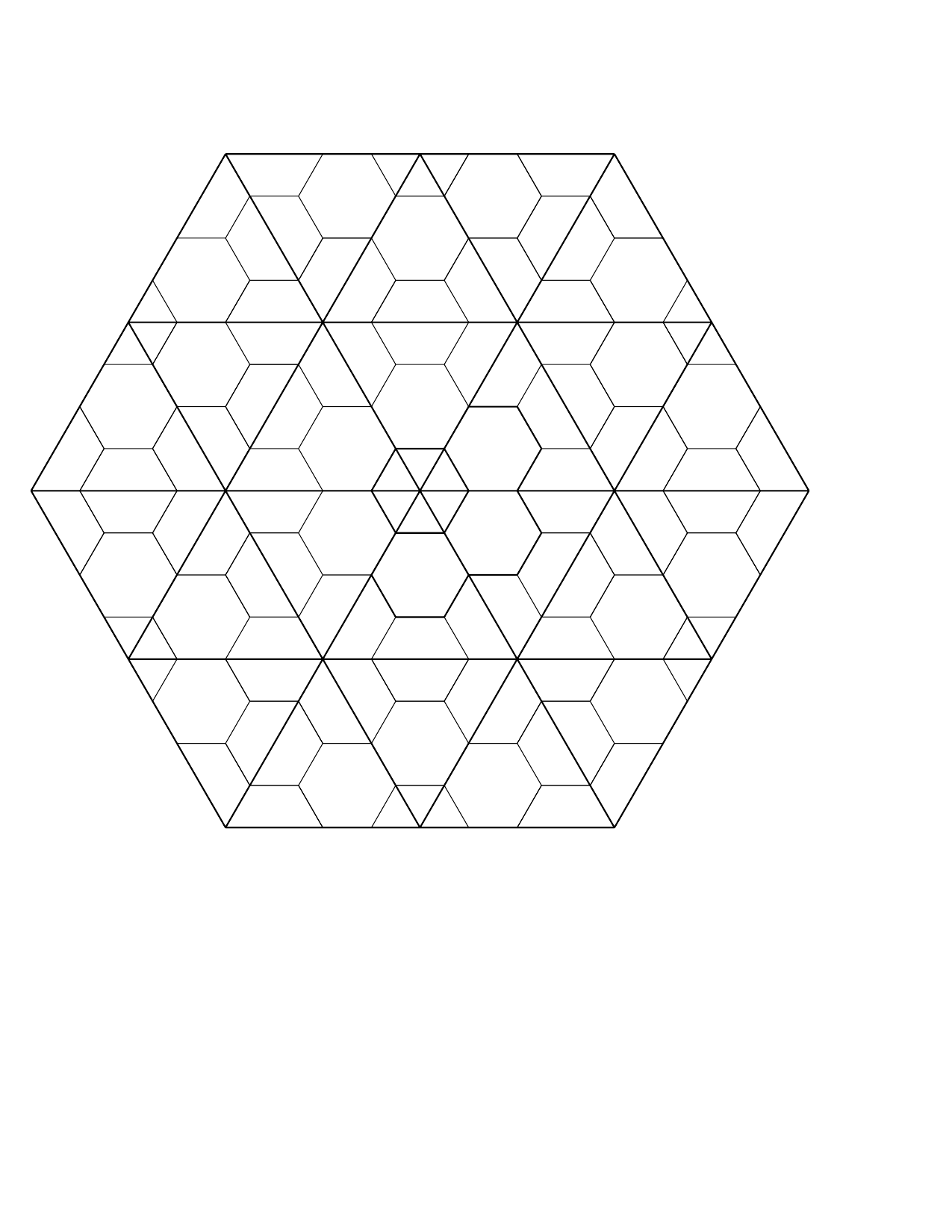}}

 \end{picture}\end{center}\caption{Left: A recovery patch $R$ of an equilateral triangle;
 \quad Right: a recovery patch grid $\mathcal{R}_h=\{ \t{regular triangle } \ R \}$
      on a regular hexagon domain. }   
\label{f-6tr} 
\end{figure}  

On each patch $R$, there are 11 vertices $\{\b x_j, \ j=1,\dots, 11\}\subset \mathcal{N}_h$
    of $\mathcal{T}_h$, cf. Figure \ref{f-6tr}.
Let $\tilde u_h\in P_3(R)$ be the least squares solution of following
   $11\times 10$ linear system of equations, 
\an{\label{lift} \tilde u(\b x_j) = u_h(\b x_j), \quad \ j=1,\dots, 11, }
where $u_h\in V_h$ is the $P_1$ solution of \eqref{f-e}.

\begin{lemma} The linear system \eqref{lift} of equations has a unique solution.
\end{lemma}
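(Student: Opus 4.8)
The claim is that the $11 \times 10$ least-squares system \eqref{lift} has a unique solution. Since a least-squares solution always exists, the content is uniqueness, and uniqueness of the least-squares solution is equivalent to the coefficient matrix having full column rank $10 = \dim P_3(R)$. Equivalently, the evaluation functionals $p \mapsto p(\b x_j)$, $j = 1, \dots, 11$, span the dual space $P_3(R)^*$; equivalently, the only $p \in P_3(R)$ vanishing at all eleven nodes $\b x_j$ is $p \equiv 0$. So the plan is: first reduce to this unisolvence-type statement, then verify it by an explicit argument exploiting the geometry of the eleven nodes in the equilateral triangle $R$ of edge-size $2h$.

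For the verification, I would lay out the eleven nodes explicitly using the triangular mesh $\TT$ restricted to $R$: the three corners of $R$, the three edge-midpoints of $R$, and the remaining interior/edge lattice points of the size-$h$ sublattice (from Figure \ref{f-6tr} one reads off that $R$ contains the $10$ points of the standard degree-$3$ principal lattice on an equilateral triangle — corners, two points per edge, one centroid — plus one extra node, likely the center of one of the four small triangles or a second interior point). The key observation is that the $10$ points of the standard degree-$3$ principal lattice on a triangle are already $P_3$-unisolvent (this is the classical Lagrange interpolation result for $P_3$ on a simplex). Hence the map $P_3(R) \to \mathbb{R}^{10}$ given by evaluation at those ten nodes is already injective, so a fortiori the map into $\mathbb{R}^{11}$ using all eleven nodes is injective, which is exactly full column rank. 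Therefore I would argue: (i) identify a subset of $10$ among the $\{\b x_j\}$ that forms the principal lattice of order $3$ in $R$; (ii) invoke $P_3$-unisolvence of that lattice; (iii) conclude the $11 \times 10$ matrix has rank $10$, hence the normal equations $A^\top A \tilde u = A^\top b$ have a unique solution.

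The step I expect to be the only real obstacle is step (i): being certain that, among the eleven nodes of $\mathcal{T}_h$ lying in the size-$2h$ triangle $R$, one can actually find a copy of the degree-$3$ principal lattice. This is a matter of reading Figure \ref{f-6tr} correctly and checking the honeycomb-lattice coordinates; it is geometrically transparent but must be stated carefully because the honeycomb vertices are the corners of the small equilateral triangles of edge $h$, and within a size-$2h$ equilateral triangle these corners form exactly the $1, 2, 3$ rows of lattice points — i.e. $1 + 2 + 3 = 6$ boundary-and-corner points plus, depending on orientation, additional interior points — so one has to confirm the total count and that no degeneracy (three collinear among a would-be unisolvent set in a bad way) occurs. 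Once the ten unisolvent nodes are pinned down, the rest is the standard linear-algebra fact that an overdetermined system with full column rank has a unique least-squares solution, which I would state in one line.

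If one prefers to avoid reading the figure, an alternative self-contained route for step (i)–(ii) is a direct factorization argument: order the three edges $e_1, e_2, e_3$ of $R$ with barycentric-type linear forms $\lambda_1, \lambda_2, \lambda_3$ (each vanishing on the opposite edge), note that four of the eleven nodes lie on $e_1$, so $p|_{e_1} \in P_3(e_1)$ vanishing at four distinct points forces $p|_{e_1} \equiv 0$, hence $\lambda_1 \mid p$; repeat with the three remaining nodes on $e_2$ (three distinct points of $e_2$ not all on $e_1$) to get $\lambda_2 \mid p$ after dividing; then the quotient lies in $P_1(R)$ and must vanish at enough of the remaining nodes (a corner/midpoint not on $e_1 \cup e_2$, etc.) to be zero. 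I would likely present this factorization version since it is rigorous without appealing to a picture, and fold the figure-based count into it only as the source of which nodes sit on which edge.
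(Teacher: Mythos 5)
Your fallback factorization argument is exactly the paper's proof: four nodes on $e_1$ force $l_1\mid \tilde u_h$, three further nodes on $e_2$ force $l_2$ to divide the quotient, two interior nodes of $e_3$ force $l_3$ to divide the remaining linear factor, and an interior node kills the constant; since you state you would present that version, your proposal is essentially correct and coincides with the paper's route. However, your primary route (i)--(iii) rests on a false premise and would not survive the check you yourself flagged as the "only real obstacle": the eleven nodes are honeycomb vertices, and along any edge of $R$ the collinear honeycomb vertices are spaced with alternating gaps $h,2h,h,\dots$, so the four nodes on an edge sit at parameters $0,\tfrac14,\tfrac34,1$ (or $0,\tfrac12,\tfrac34,1$ on the slanted edges), never at the thirds $0,\tfrac13,\tfrac23,1$; moreover there are two interior nodes, neither of which is the centroid of $R$. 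Hence no ten of the eleven nodes form the order-$3$ principal lattice, and the classical Lagrange unisolvence result cannot be invoked directly. Your instinct to distrust step (i) and to fold the figure-reading into the factorization argument (whose only geometric input is the count $4+3+2+2$ of nodes on $e_1$, on $e_2\setminus e_1$, interior to $e_3$, and interior to $R$) was the right call.
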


\begin{proof} The coefficient matrix of \eqref{lift} is of size $11\times 10$, as
      $\dim P_3(R)=10$.  To show it is a full-rank matrix,  we can show $\tilde u_h=0$
   if  \a{ \tilde u_h(\b x_j) = 0, \quad \ j=1,\dots, 11.}

 Because the $P_3$ polynomial $\tilde u_h$ vanishes at four points on edge $e_1$, 
  cf. Figure \ref{f-6tr}, it vanishes on the whole edge $e_1$.
 Thus \a{ \tilde u_h = l_1 p_2, }
where $p_2\in P_2(R)$ and $l_1\in P_1(R)$ which vanishes only on $e_1$.

Because $\tilde u_h$ vanishes but $l_1$ does not vanish at three top points
  of edge $e_2$, 
  cf. Figure \ref{f-6tr}, $p_2$ vanishes on the whole edge $e_2$.
 Thus \a{ \tilde u_h = l_1 l_2 p_1, }
where $p_1\in P_1(R)$ and $l_2\in P_1(R)$ which vanishes only on $e_2$.

Because $\tilde u_h$ vanishes but $l_1$ and $l_2$ do not vanish at two internal points
  of edge $e_3$, 
  cf. Figure \ref{f-6tr}, $p_1$ vanishes on the whole edge $e_3$.
 Thus \a{ \tilde u_h = C l_1 l_2 l_3, }
where $C\in P_0(R)$ and $l_3\in P_1(R)$ which vanishes only on $e_3$.

There are two internal $\b x_j$ inside $R$. 
Because $\tilde u_h(\b x_j)=0$, $l_1(\b x_j)\ne 0$, $l_2(\b x_j)\ne 0$ 
    and $l_3(\b x_j)\ne 0$, 
  cf. Figure \ref{f-6tr}, $C=0$, i.e. $\tilde u_h=0$.

Thus the   matrix in \eqref{lift} is of full rank and the
 solution $\tilde h$ is well defined uniquely by the least squares method.
\end{proof}

\begin{theorem} Let $u\in H^1_0(\Omega)\cap H^{4,\infty}(\Omega)$ and $\tilde u_h$ 
  be the solutions of Poisson's equation \eqref{p-e} and
  the lifting equation \eqref{lift}, respectively.
  It holds that
\a{  \| u- \tilde  u_h \|_{L^2}+ h| u- \tilde  u_h  |_{H^1,h} 
       \le C h^4|u|_{H^{4,\infty}}, }
where  $| u- \tilde  u_h  |_{H^1,h}$ is defined piecewise on $\mathcal{R}_h$.
\end{theorem}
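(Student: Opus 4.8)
The plan is to establish the bound on a single patch $R$ by a scaling/Bramble--Hilbert argument and then sum over all $R\in\mathcal{R}_h$. On the reference patch $\hat R$ (an equilateral triangle of unit size with its $11$ nodes), the least-squares operator $L$ sending nodal data to the $P_3$ best fit is a fixed bounded linear map; since $L$ reproduces all of $P_3$ (if the nodal data come from a cubic, that cubic is the exact, hence least-squares, solution by Lemma~4.2), the composite error operator $v\mapsto v-L(v|_{\text{nodes}})$ annihilates $P_3$. The standard Bramble--Hilbert lemma then gives, for $w\in H^{4,\infty}(\hat R)$, the estimate $\|w-L(w|_{\text{nodes}})\|_{L^\infty(\hat R)}\le C|w|_{H^{4,\infty}(\hat R)}$, and similarly in the $H^1$-seminorm. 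Scaling back to a patch of size $2h$ converts this into $\|u-\widetilde{L}(u|_{\text{nodes}})\|_{L^2(R)}+h|u-\widetilde{L}(u|_{\text{nodes}})|_{H^1(R)}\le C h^4|u|_{H^{4,\infty}(R)}$, where $\widetilde{L}$ is the exact-data lift.

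Next I would bring in the actual data. The computed lift $\tilde u_h$ uses $u_h(\b x_j)$, not $u(\b x_j)$, so by linearity of the least-squares map, $\tilde u_h - \widetilde{L}(u|_{\text{nodes}}) = L\bigl((u_h-I_h u)|_{\text{nodes}}\bigr)$ on each patch. Here I use that $I_h u$ interpolates $u$ at the nodes of $\mathcal{N}_h$, so the data fed into $L$ is exactly $(u_h-I_h u)(\b x_j) = -e_h(\b x_j)$ with $e_h = I_h u - u_h$. Since $L$ is bounded on the finite-dimensional nodal-data space (again after scaling: $\|L(\text{data})\|_{L^\infty(R)}\le C\max_j|\text{data}_j|$ with the $L^\infty$ norm scale-invariant, and $|L(\text{data})|_{H^1(R)}\le C h^{-1}\max_j|\text{data}_j|$ by an inverse-type estimate on the scaled patch), Theorem~3.4 gives $\max_j|e_h(\b x_j)|\le \|I_h u-u_h\|_{L^\infty}\le C h^4|u|_{H^{4,\infty}}$. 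Combining, $\|\tilde u_h-\widetilde{L}(u|_{\text{nodes}})\|_{L^2(R)}+h|\tilde u_h-\widetilde{L}(u|_{\text{nodes}})|_{H^1(R)}\le C h\cdot h^4|u|_{H^{4,\infty}}\cdot |R|^{1/2}$, which after summing over the $O(h^{-2})$ patches is easily absorbed into $Ch^4|u|_{H^{4,\infty}}$ (indeed it is higher order). A triangle inequality between $u$, $\widetilde{L}(u|_{\text{nodes}})$, and $\tilde u_h$, squared and summed over $R\in\mathcal{R}_h$, yields the claimed global bound $\|u-\tilde u_h\|_{L^2}+h|u-\tilde u_h|_{H^1,h}\le Ch^4|u|_{H^{4,\infty}}$.

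The main obstacle I anticipate is making the Bramble--Hilbert step fully rigorous for the \emph{least-squares} map rather than an interpolation map: one must verify that $L$ is well-defined and bounded uniformly, which follows from Lemma~4.2 (full rank of the $11\times10$ matrix on the reference patch) together with the fact that all patches in $\mathcal{R}_h$ are affine-equivalent to a single reference patch (they are congruent equilateral triangles of side $2h$, so only a rigid motion plus uniform dilation is involved, and no shape-regularity degeneration can occur). A secondary point requiring care is bookkeeping of the scaling exponents: the $L^\infty$-to-$L^\infty$ bound on $L$ is scale-free, the $L^\infty$-to-$H^1$ bound picks up $h^{-1}$, and the volume factor $|R|^{1/2}\sim h$ enters when passing from the per-patch $L^2$ bound to the sum; I would track these explicitly once, in the reference-patch computation, and then the rest is routine. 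Finally, one should note that boundary patches (those meeting $\partial\Omega$, built from triangles and pentagons) are handled identically since the patch $R$ is still a union of mesh triangles and the same reference configuration applies, or else remark that the finitely many boundary patches contribute only lower-order terms.
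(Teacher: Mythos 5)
Your proposal is correct and follows essentially the same route as the paper: both decompose $u-\tilde u_h$ into the exact-data lift error (controlled by $P_3$-reproduction plus stability of the least-squares operator, i.e.\ a Bramble--Hilbert/scaling argument, which the paper gets by citing \cite{Scott-Zhang}) and the perturbation term $\tilde I_h(I_h u-u_h)$ (controlled by stability of the lift together with the superconvergence results of Section~3), finishing with a triangle inequality. The only cosmetic difference is that you route the data error through the nodal sup-norm and the $L^\infty$ superconvergence bound, whereas the paper uses the $L^2$ and $H^1$ bounds on $I_h u-u_h$; both suffice, and your per-patch scaling bookkeeping, despite one stray factor of $h$, still yields the claimed $Ch^4$ after summation.
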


\begin{proof} Let $\tilde I_h u=\tilde u_h\in P_3(R)$ be the least squares solution of
 \eqref{lift}.
Because the interpolation $\tilde I_h u$ preserves $P_3$ polynomials and is stable
  (cf. \cite{Scott-Zhang}), we have
\a{ \| u- \tilde  I_h u \|_{L^2}+ h| u- \tilde  I_h u  |_{H^1,h} 
       \le C h^4|u|_{H^{4}}. }
As $\tilde u_h\in  P_3(R)$, $\tilde I_h \tilde u_h = \tilde u_h$.
By definition \eqref{lift}, $\tilde I_h u=\tilde I_h I_h u$.
Thus, as $\tilde I_h$ is stable (cf. \cite{Scott-Zhang}), 
\a{ &\quad \ \| u- \tilde  u_h \|_{L^2}+ h| u- \tilde  u_h  |_{H^1,h} \\
    & \le \| u- \tilde I_h u \|_{L^2}+\|\tilde I_h(I_h u-  u_h) \|_{L^2} \\ &\qquad + 
       h| u- \tilde  u_h  |_{H^1,h} + h |\tilde I_h(I_h u-  u_h) \|_{H^1,h}\\
    & \le C h^4|u|_{H^{4,\infty}} + C\| I_h u-  u_h  \|_{L^2}+ 
       C h^4|u|_{H^{4,\infty}} + C h | I_h u-  u_h |_{H^1,h}\\
    & \le C h^4|u|_{H^{4,\infty}}. }
The theorem is proved.
\end{proof}

                \vskip .7cm

\section{Numerical test}

We solve the Poisson equation \eqref{p-e},
   on a regular hexagon domain $\Omega$ of size $1$ centered at the origin.
    The exact solution of \eqref{p-e} for the numerical test is
\an{\label{s-1} \ad{u(x,y)&=x^2\sin(\frac{\pi}2(\frac y{\sqrt 3}
     +x+1))\sin\frac{\pi}2(\frac y{\sqrt 3}
     -x+1))\\
   &\qquad \cdot \sin(\frac{\pi}{\sqrt 3}( y+\frac {\sqrt 3}2 )). }  }

The first three grids are depicted in Figure \ref{f-grid}.
Here the refinement can not be nested.
Some honeycombs are split and combined with some neighbor honeycombs' debris to
  form half-sized honeycombs.
The higher level grids are obtained by such half-sized refinements.
 
In Table \ref{t-1}, we list the error between the interpolation and the $P_1$
   virtual element
  solution to show two-order or three-order superconvergence.
The optimal order of convergence is $2$ in $L^2$ and $L^\infty$ norms, but we have an
  order $4$ for them in Table \ref{t-1}, i.e., order-two superconvergence.
The optimal order of convergence is $1$ for $P_1$ elements in $H^1$-norm, but we have an
  order $4$ for it in Table \ref{t-1}, i.e., order-three superconvergence. 
The results match the theory on the superconvergence.  
In Table \ref{t-1}, on first level, there is no internal degree of freedom in $V_h$ that
  the errors are supposedly zero.   They are round-off errors when evaluating
  the zero boundary condition.

\begin{table}[ht]
  \centering  \renewcommand{\arraystretch}{1.2}
  \caption{Error profile of $u_h$ on Figure \ref{f-grid} meshes  for  \eqref{s-1}. }
  \label{t-1}
\begin{tabular}{c|cc|cc|cc}
\hline
Grid &   $\|I_h u-u_h\|_{L^2}$  &  $O(h^r)$ 
   &   $|I_h u-u_h|_{H^1}$ & $O(h^r)$ &   $\|I_h u-u_h\|_{L^\infty}$  &  $O(h^r)$ \\
\hline 
 1&  0.1233E-31& 0.00&  0.2013E-31& 0.00&  0.1735E-31& 0.00 \\
 2&  0.1567E-02& 0.00&  0.4800E-02& 0.00&  0.1829E-02& 0.00 \\
 3&  0.1138E-03& 3.78&  0.3305E-03& 3.86&  0.1480E-03& 3.63 \\
 4&  0.7358E-05& 3.95&  0.2118E-04& 3.96&  0.9840E-05& 3.91 \\
 5&  0.4641E-06& 3.99&  0.1332E-05& 3.99&  0.6244E-06& 3.98 \\
 6&  0.2907E-07& 4.00&  0.8337E-07& 4.00&  0.3917E-07& 3.99 \\
 7&  0.1818E-08& 4.00&  0.5213E-08& 4.00&  0.2450E-08& 4.00 \\
 8&  0.1137E-09& 4.00&  0.3258E-09& 4.00&  0.1532E-09& 4.00 \\
 9&  0.7104E-11& 4.00&  0.2037E-10& 4.00&  0.9575E-11& 4.00 \\
10&  0.4440E-12& 4.00&  0.1279E-11& 3.99&  0.5985E-12& 4.00 \\
\hline
    \end{tabular}%
\end{table}%

In Table \ref{t-2}, we list the $L^2$-error between the solution and the
   $P_1$ virtual element
  solution, showing it can only converge at the optimal order in $L^2$-norm, order 2.
We also list the $L^2$ and the $H^1$ errors between the exact solution and the
  $P_3$ post-processed solution.  
The results match the theory on the optimal-order convergence of the lifted solution.  
In Table \ref{t-2}, we start to do lifting from level three.

\begin{table}[ht]
  \centering  \renewcommand{\arraystretch}{1.2}
  \caption{Error profile  of $u_h$ and $\tilde u_h$
        on Figure \ref{f-grid} meshes  for  \eqref{s-1}. }
  \label{t-2}
\begin{tabular}{c|cc|cc|cc}
\hline
Grid &   $\|u-u_h\|_{L^2}$  &  $O(h^r)$ &   $\|u-\tilde u_h\|_{L^2}$  &  $O(h^r)$ 
   &   $| u-\tilde u_h|_{H^1,h}$ & $O(h^r)$   \\
\hline 

 1&  0.8686E-01& 0.00&  0.0000E+00& 0.00&  0.0000E+00& 0.00 \\
 2&  0.3127E-01& 1.47&  0.0000E+00& 0.00&  0.0000E+00& 0.00 \\
 3&  0.9417E-02& 1.73&  0.3298E-02& 0.00&  0.2263E-01& 0.00 \\
 4&  0.2448E-02& 1.94&  0.2324E-03& 3.83&  0.2648E-02& 3.10 \\
 5&  0.6179E-03& 1.99&  0.1464E-04& 3.99&  0.3256E-03& 3.02 \\
 6&  0.1548E-03& 2.00&  0.9192E-06& 3.99&  0.4060E-04& 3.00 \\
 7&  0.3873E-04& 2.00&  0.5752E-07& 4.00&  0.5072E-05& 3.00 \\
 8&  0.9684E-05& 2.00&  0.3596E-08& 4.00&  0.6339E-06& 3.00 \\
 9&  0.2421E-05& 2.00&  0.2248E-09& 4.00&  0.7924E-07& 3.00 \\
10&  0.6053E-06& 2.00&  0.1405E-10& 4.00&  0.9905E-08& 3.00 \\
\hline
    \end{tabular}%
\end{table}%

\section{Ethical Statement}

\subsection{Compliance with Ethical Standards} { \ }

   The submitted work is original and is not published elsewhere in any form or language.

\subsection{Funding } { \ }

Yanping Lin is supported in part by HKSAR GRF 15302922  and polyu-CAS joint Lab.

Xuejun Xu is supported by National Natural Science Foundation of China (Grant
Nos. 12071350), Shanghai Municipal Science and Technology Major Project No.
2021SHZDZX0100, and Science and Technology Commission of Shanghai Municipality.

\subsection{Conflict of Interest} { \ }

  There is no potential conflict of interest.

\subsection{Ethical approval} { \ }

  This article does not contain any studies involving animals.
This article does not contain any studies involving human participants.
  
\subsection{Informed consent}  { \ }

This research does not have any human participant.  

\subsection{Availability of supporting data } { \ }

This research does not use any external or author-collected data.

\subsection{Authors' contributions } { \ }

All authors made equal contribution.
  
\subsection{Acknowledgments } { \ }

  None.

\end{document}